\newtheorem{tw}{Theorem}[section]
\newtheorem{pro}[tw]{Proposition}
\newtheorem{cor}[tw]{Corollary}
\theoremstyle{definition}
\newtheorem{exa}[tw]{Example}
\newtheorem{rem}[tw]{Remark}
\begin{document}

%\footnote{The authors are supported by the Grant  of Polish Minister
%of Science and Higher Education N N201 382634}
\begin{center}

{\Large A note on the gambling team method}\\
{\sc Krzysztof Zajkowski}\\
Institute of Mathematics, University of Bialystok \\ 
Akademicka 2, 15-267 Bialystok, Poland \\ 
E-mail:kryza@math.uwb.edu.pl 
\end{center}
\begin{abstract}

Gerber and Li in \cite{GeLi} formulated, using a Markov chain embedding, a system of equations that describes relations between generating functions of waiting time distributions for occurrences of patterns  in a sequence of independent repeated experiments when initial outcomes of the process are known. We show how this system of equations can be obtained by using the classical gambling team technique . 
We also present a form of solution of the system and give an example showing how first
results of  trials  influence the probabilities that a chosen pattern precedes remaining ones in a realization of the process. 
 
\end{abstract}

{\it 2010 Mathematics Subject Classification:} 60G40, 60G42. 

{\it Key words: martingales, stopping times, optional stopping theorem, gambling team technique, generating functions}

\section{Introduction and notation}

In  a study of the occurrence of patterns in a process of independent repeated experiments Li in \cite{Li} invented a martingale method (the gambling team method)
and formed a system of equations which related the expected time of waiting  until  any pattern appears and the probabilities that one of patterns
precedes  the remaining ones. Gerber and Li \cite{GeLi} using a Markov chain embedding extended this result to generating functions of waiting time distributions for patterns. The main goal is to show how immediately, using the classical gambling team method, one can obtain the system of equations for the generating functions and also to propose some form of solution of this system.

A development of the gambling team method to many teams of gamblers was introduced by Pozdnyakov et al. \cite{PozGKS}
(see also \cite{PozKull}). It can be used to compute higher moments, generating functions of the waiting time and to calculate probabilities for scan statistics (see \cite{PozGKS,PozStee}). In \cite{PozGKS1}, \cite{Poz} and \cite{FisCui} one can find an application of the method of gambling team to investigations of occurrences of patterns in Markov chains. A more general technique for the Markov chain embedding method was introduced by Fu \cite{Fu}, and has been further developed by other authors
(see \cite{FuLou} for more details).

Throughout the article we employ the following notation. Let $\xi$ be an arbitrary but fixed discrete random variable. We  call the set $\Sigma$ of possible values of $\xi$  the alphabet. We assume that the probability of each letter is positive: $Pr(\xi=a)>0$ for any $a\in\Sigma$.
Let $(\xi_n)_{n=1}^\infty$ be a sequence of independent, identically distributed random letters over $\Sigma$ having the same distribution as $\xi$.
%The process $(\xi_n)$ is sometime called the random text.

By a pattern (word) $B$ of the length $m$ we mean a finite ordered sequence of letters $b_1b_2...b_m$. 
Let $\tau_B$ denote a time of waiting (stopping) until $B$ occurs as a run in the process $(\xi_n)$. 
We  assume  that a pattern $A=a_1...a_l$ is already given at the beginning of the process and that $B$ is not a subpattern of $a_1...a_{l-1}$.
Define now a stopping time for $B$, given $A$ to start with: 
\begin{equation}
\label{taui}
\tau_{AB}=\min\{k\ge 0:\;B \;{\rm is\;a\;subpattern\; of}\;a_1...a_{l-1}\xi_0\xi_1...\xi_k\}; 
\end{equation}
we assume that $\xi_0=a_l$.
\newline
First we are interested in the expected waiting time of $\tau_{AB}$.
%We show the solution obtained by Li \cite{Li} and Gerber and Li \cite{GeLi}
%in the more general form. 
We recall a general solution, obtained by  Gerber and Li \cite{GeLi}, based on an application of martingale techniques to a derivation of the probability generating function %$E(\alpha^{\tau_{AB}})$ 
of $\tau_{AB}$.

Before we show this solution, we introduce some more notation. For a given pattern $A=a_1...a_l$ writing $Pr(A)$ we mean the product of probabilities $Pr(\xi=a_1)\cdot...\cdot Pr(\xi=a_l)$.
Let $A_{(k)}$ and $A^{(k)}$  denote subpatterns  formed by first and last $k$ letters 
of $A$, respectively; i.e. $A_{(k)}=a_1a_2...a_k$ and $A^{(k)}=a_{l-k+1}a_{l-k+2}...a_l$.  For patterns $A$ and $B$ we adopt the notation 
$[A^{(k)} = B_{(k)}]=1$ if $A^{(k)} = B_{(k)}$  and $[A^{(k)} = B_{(k)}]=0$ if not (roughly speaking the square bracket takes logical values from a sentence contained in it).
Let $0<\alpha <1$. 
We define a correlation function $(A\ast B)(\alpha)$  as
$$
(A\ast B)(\alpha)= \sum_{k=1}^{\min\{l,m\}}\frac{[A^{(k)} = B_{(k)} ]}{Pr(B_{(k)})\alpha^{k}}
=\sum_{k=1}^{\min\{l,m\}}\frac{[A^{(k)} = B_{(k)}]}{Pr(\xi=b_1)\cdot...\cdot Pr(\xi=b_k)\alpha^k}.
$$

\begin{exa}
Let $\Sigma=\{H,T\}$. Assume that 
$$
Pr(\xi=H)=p\quad{\rm and}\quad Pr(\xi=T)=q=1-p.
$$
Consider two patterns $A=THH$ and $B=THTH$. Then the correlation functions have the following forms:
\begin{eqnarray*}
(A\ast A)(\alpha)=\frac{1}{p^2q\alpha^3},&\;& (B\ast A)(\alpha)=\frac{1}{pq\alpha^2},\\ 
(A\ast B)(\alpha)=0,& \; & (B\ast B)(\alpha)=\frac{1}{pq\alpha^2}+\frac{1}{p^2q^2\alpha^4}. 
\end{eqnarray*}
\end{exa} 

\section{Gambling team technique}
\label{Gteam}
Let a casino generate the sequence of letters $(\xi_n)$. We wait for the information on a realization of the pattern $B$. We are impatient and 
visit the casino at the $l$th moment (right after the $l$th round) and observe that at the beginning the pattern $A$ occurs. We ask how long, on average, we would wait for  $B$.

Consider a flow of gamblers (a gambling team) visiting the casino.  Let $0<\alpha<1$ and  the $n$th gambler arrives right before $n$th round  and places \$$\alpha^{n-1}$ bet that
$\xi_n=b_1$. If $\xi_n$ is not $b_1$ the gambler loses the bet and leaves the game.
If $\xi_n=b_1$ the casino pays fair odds 
$$
\frac{\alpha^{n-1}}{Pr(\xi_n=b_1)}=\frac{\alpha^{n-1}}{Pr(\xi=b_1)}.  
$$
Next the gambler bets his entire capital on $\xi_{n+1}=b_2$. If it is not $b_2$, he/she goes home
with nothing; otherwise he/she increases his/her capital by the factor $1/Pr(\xi=b_2)$. 
Then he/she continues in the same fashion until the entire
word $B$ is exhausted. If the gambler is lucky he/she leaves the game with total winnings of
$$
\frac{\alpha^{n-1}}{Pr(\xi_n=b_1)\cdot...\cdot Pr(\xi_{n+m-1}=b_m)}=\frac{\alpha^{n-1}}{Pr(\xi=b_1)\cdot...\cdot Pr(\xi=b_m)}=\frac{\alpha^{n-1}}{Pr(B)}
$$
dollars. 
We should remember that in the meantime new players entered  the game and may
also have some amounts of money. It depends on whether some initial parts of $B$ overlap with final ones.

Let $X_n$ denote the total net gain of the casino at the moment $n$. Let us emphasize that $X_0=0$ and $X_n$ is bounded from above by the entire capital of gambling team that is $X_n\le \frac{1}{1-\alpha}$. Under a theoretical assumption that each gambler is 'lucky' and wins maximal amount of money one can bound $X_n$ from below by $\frac{-1}{(1-\alpha)Pr(B)}$. Thus  we get
$$
\vert X_n \vert \le \frac{1}{(1-\alpha)Pr(B)}.
$$
Let us recall that we know the first $l$ generated letters (pattern $A$). For this reason
we can calculate the deterministic value of $X_l$:
\begin{equation}
\label{ixm}
X_l=1+\alpha+\ldots+\alpha^{l-1}-\alpha^l(A\ast B)(\alpha)=\frac{1-\alpha^l}{1-\alpha}-\alpha^l(A\ast B)(\alpha),
\end{equation}
where first summands are payments of  first $l$ gamblers and $\alpha^l(A\ast B)(\alpha)$ gives their winnings at the moment $l$. Let $Y_n=X_{l+n}$.
Note that $Y_0=X_l$.
Because the game is fair in each round, the sequence $(Y_n)_{n=0}^\infty$ forms a martingale. It is bounded. So for any stopping time $\tau$, by the optional
stopping theorem, we have 
\begin{equation}
\label{opt}
Y_0=EY_\tau.
\end{equation}
In the following let $\tau=\tau_{AB}$. Then
\begin{equation}
\label{stop}
Y_\tau=X_{l+\tau}= 1+\alpha+\ldots+\alpha^{l+\tau-1}-\alpha^{l+\tau}(B\ast B)(\alpha)=\frac{1-\alpha^{l+\tau}}{1-\alpha}-\alpha^{l+\tau}(B\ast B)(\alpha),
\end{equation}
where $\alpha^{l+\tau}(B\ast B)(\alpha)$ is the total winning of the gambling team by time $\tau=\tau_{AB}$.
By virtue of (\ref{ixm}),(\ref{opt}) and (\ref{stop}) we obtain
$$
\frac{1-\alpha^lE(\alpha^\tau)}{1-\alpha}-\alpha^l(B\ast B)(\alpha)E(\alpha^\tau)=\frac{1-\alpha^l}{1-\alpha}-\alpha^l(A\ast B)(\alpha)
$$
and hence we get the formula for the probability generating function of $\tau_{AB}$
$$
E(\alpha^{\tau_{AB}})=\frac{1+(1-\alpha)(A\ast B)(\alpha)}{1+(1-\alpha)(B\ast B)(\alpha)};
$$
compare Theorem 4.1 in \cite{GeLi}.

Let $g_\tau(\alpha)$ denote $E(\alpha^{\tau})$ and $Q_\tau(\alpha)$ a function $\frac{1-g_\tau(\alpha)}{1-\alpha}$, which is 
the generating function of the sequence of cumulative probabilities $(Pr(\tau>n))_{n=0}^\infty$ that is
$\sum_{n=0}^\infty Pr(\tau>n)\alpha^n$. If the value $Q_\tau(1)$, as the limit for $\alpha$ tending to 1, exists, then $E\tau=Q_\tau(1)$.
In our case 
$$
Q_{\tau_{AB}}(\alpha)=\frac{(B\ast B)(\alpha)-(A\ast B)(\alpha)}{1+(1-\alpha)(B\ast B)(\alpha)}
$$
and hence
$$
E\tau_{AB}=Q_{\tau_{AB}}(1)=(B\ast B)(1)-(A\ast B)(1).
$$

\section{Competing patterns}
Let $\mathfrak{B}$ denote a collection of $m$ patterns (words)  $B_i$ ($1\le i \le m$) of lengths $l_i$, respectively. 
We assume that none of them contains any other as a subpattern.
As before we  consider the situation where a pattern $A=a_1...a_l$  is given at the beginning of the process and none $B_i$ is a subpattern
of $a_1...a_{l-1}$.% for $1\le i \le m$.

Let now $\tau_{A\mathfrak{B}}$ be the time of stopping  until one of the collection of patterns $\mathfrak{B}$ is observed, given $A$ to start with, i.e. 
\begin{equation}
\label{stoptime}
\tau_{A\mathfrak{B}}=\min\{\tau_{AB_i}:\;1\le i \le m\}.
\end{equation}
Assuming that the gambling team bets on the chosen pattern $B_i$ ($1\le i \le m$)
and using
instead  $\tau_{AB_i}$ the stopping time $\tau_{A\mathcal{B}}$,
we obtain a  system of equations equivalent to (39) \cite{GeLi}.
%describing the generating function of competing patterns.

%Let $g_i$ denote the function $E(\alpha^\tau{\bf 1}_{\{\tau=\tau_i\}})$, where ${\bf 1}_{\{\tau=\tau_i\}}$ is the indicator function of the event $\{\tau=\tau_i\}$. 
%Observe that $g_\tau=\sum_{j=1}^m g_j$.
\begin{pro}
\label{gamb}
Let $\tau$ denote the stopping time $\tau_{A\mathfrak{B}}$ defined as above and $\tau_i$ the stopping time $\tau_{AB_i}$ defined by (\ref{taui}).
Let  $g^j_\tau(\alpha)$ be the function $E(\alpha^\tau{\bf 1}_{\{\tau=\tau_j\}})$, where ${\bf 1}_{\{\tau=\tau_j\}}$ is the indicator function of the event ${\{\tau=\tau_j\}}$. Then for every $i$ ($1\le i \le m$) the following equation holds
$$
-Q_\tau(\alpha) +\sum_{j=1}^m (B_j\ast B_i)(\alpha)g_\tau^{j}(\alpha)   =  (A\ast B_i)(\alpha) ,
$$
where 
$
Q_\tau(\alpha)=\frac{1-E(\alpha^\tau)}{1-\alpha}.
$
\end{pro}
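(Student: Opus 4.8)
The plan is to reproduce the single-pattern argument of Section~\ref{Gteam} almost verbatim, changing only two things: the gambling team now commits to one fixed target $B_i$, and the game is halted at the earlier time $\tau=\tau_{A\mathfrak{B}}$ rather than at $\tau_{AB_i}$. So fix $i$ and let the $n$th gambler bet, exactly as described in Section~\ref{Gteam}, on the successive letters of $B_i$; write $X_n$ for the casino's net gain at time $n$ under this scheme. As before, $(X_n)$ is bounded by $\frac{1}{(1-\alpha)Pr(B_i)}$, the shifted sequence $Y_n=X_{l+n}$ is a martingale because each round is fair, and the knowledge of the initial block $A$ makes $Y_0=X_l$ deterministic, namely $X_l=\frac{1-\alpha^l}{1-\alpha}-\alpha^l(A\ast B_i)(\alpha)$, just as in (\ref{ixm}). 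Since $\tau\le\tau_i$ is a stopping time and the martingale is bounded, the optional stopping theorem (\ref{opt}) gives $Y_0=EY_\tau=EX_{l+\tau}$.

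The main work — and the step I expect to be the real obstacle — is to evaluate $X_{l+\tau}$, which is no longer deterministic once several patterns compete. The total of all bets placed up to time $l+\tau$ is still $\frac{1-\alpha^{l+\tau}}{1-\alpha}$, so everything hinges on the winnings paid out at the halting moment. This is where competition enters: at time $\tau$ exactly one of the patterns, say $B_j$ on the event $\{\tau=\tau_j\}$, has just been completed, and the most recently generated letters form a suffix of $B_j$. A gambler betting on $B_i$ is still solvent at that instant precisely when the last $k$ letters agree with the first $k$ letters of $B_i$, i.e. when $[B_j^{(k)}=(B_i)_{(k)}]=1$, and such a gambler then holds capital $\alpha^{l+\tau}\big/\big(Pr((B_i)_{(k)})\alpha^{k}\big)$. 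Summing over $k$ identifies the total payout on $\{\tau=\tau_j\}$ as $\alpha^{l+\tau}(B_j\ast B_i)(\alpha)$, so that $X_{l+\tau}=\frac{1-\alpha^{l+\tau}}{1-\alpha}-\alpha^{l+\tau}(B_j\ast B_i)(\alpha)$ there, in perfect analogy with (\ref{stop}). I would emphasise that the hypothesis that no $B_j$ is a subpattern of another forbids two patterns from ending simultaneously — a shorter one would be a suffix, hence a subpattern, of a longer one — so the events $\{\tau=\tau_j\}$ genuinely partition $\{\tau<\infty\}$ and this bookkeeping is unambiguous.

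It then remains to assemble the pieces. Writing $X_{l+\tau}=\sum_{j=1}^m\mathbf 1_{\{\tau=\tau_j\}}\big(\frac{1-\alpha^{l+\tau}}{1-\alpha}-\alpha^{l+\tau}(B_j\ast B_i)(\alpha)\big)$ and taking expectations, the first term contributes $\frac{1-\alpha^l E(\alpha^\tau)}{1-\alpha}$, since the indicators sum to $1$, while the second contributes $\alpha^l\sum_{j=1}^m(B_j\ast B_i)(\alpha)\,g^j_\tau(\alpha)$ by the very definition $g^j_\tau(\alpha)=E(\alpha^\tau\mathbf 1_{\{\tau=\tau_j\}})$ together with $\alpha^{l+\tau}=\alpha^l\alpha^\tau$. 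Equating the resulting $EX_{l+\tau}$ with $Y_0=\frac{1-\alpha^l}{1-\alpha}-\alpha^l(A\ast B_i)(\alpha)$ and collecting the two purely deterministic fractions, their difference $\frac{1-\alpha^l E(\alpha^\tau)}{1-\alpha}-\frac{1-\alpha^l}{1-\alpha}$ equals $\alpha^l\frac{1-E(\alpha^\tau)}{1-\alpha}=\alpha^l Q_\tau(\alpha)$; dividing through by $\alpha^l\ne 0$ and rearranging the remaining terms yields $-Q_\tau(\alpha)+\sum_{j=1}^m(B_j\ast B_i)(\alpha)g^j_\tau(\alpha)=(A\ast B_i)(\alpha)$. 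Since $i$ was arbitrary, this establishes all $m$ equations simultaneously.
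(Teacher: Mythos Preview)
Your proof is correct and follows essentially the same route as the paper's own argument: fix $B_i$, run the Section~\ref{Gteam} gambling team on it, note that the bounded martingale $Y_n=X_{l+n}$ has deterministic initial value $\frac{1-\alpha^l}{1-\alpha}-\alpha^l(A\ast B_i)(\alpha)$, stop at $\tau=\tau_{A\mathfrak{B}}$, observe that on $\{\tau=\tau_j\}$ the payout is $\alpha^{l+\tau}(B_j\ast B_i)(\alpha)$, and apply optional stopping. Your write-up is in fact more detailed than the paper's --- you spell out why the surviving gamblers' capital sums to $\alpha^{l+\tau}(B_j\ast B_i)(\alpha)$, why the no-subpattern hypothesis makes the events $\{\tau=\tau_j\}$ a genuine partition, and you carry out the final algebraic cancellation explicitly --- but the underlying idea is identical.
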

\begin{proof}
Fix the pattern $B_i$ $(1\le i \le m)$.
Let the gambling team places its bets on the occurrence of $B_i$ according to the rules  described in Section \ref{Gteam}. Let $X^i_n$ denote the total net gain of the casino 
at the moment $n$. If the initial word $A$ of the length $l$ is known then 
$$
X^i_l=\frac{1-\alpha^l}{1-\alpha}-\alpha^l(A\ast B_i)(\alpha).
$$
Define  $X^i_{l+n}$ as $Y^i_n$. The process $(Y^i_n)_{n=0}^\infty$ forms a bounded martingale. Let now the stopping time $\tau$ equal $\tau_{A\mathfrak{B}}$.
Observe  that 
a net gain of the casino depends on the case in which the  pattern $B_j$ $(1\le j \le m)$  is observed at time $\tau$. 
On the set $\{\tau=\tau_j\}$ it takes the value $\alpha^{l+\tau}
(B_j\ast B_i)(\alpha)$.
Thus for $\tau=\tau_{A\mathfrak{B}}$ we get
$$
Y^i_\tau=X^i_{l+\tau}=\frac{1-\alpha^{l+\tau}}{1-\alpha}-\alpha^{l+\tau}\sum_{j=1}^m(B_j\ast B_i)(\alpha){\bf 1}_{\{\tau=\tau_j\}}.
$$
By the optional stopping theorem we obtain the following equation
\begin{equation*}
\frac{1-\alpha^l}{1-\alpha}-\alpha^l(A\ast B_i)(\alpha)  = 
\frac{1-\alpha^lE(\alpha^\tau)}{1-\alpha}-\alpha^{l}
\sum_{j=1}^m(B_j\ast B_i)(\alpha)E(\alpha^\tau{\bf 1}_{\{\tau=\tau_j\}})
\end{equation*}
which, simplified and expressed in terms of  the functions $Q_\tau$ and $g^j_\tau$, completes the proof.
\end{proof}
\begin{rem}
The above Proposition contains a system of equations which is equivalent to (39) \cite{GeLi}. To derive this system Gerber and Li 
used a Markov chain embedding. This is a general method and of independent interest. In our proof of Proposition \ref{gamb} we show how immediately, by using  the classical gambling team technique for generating functions, this system can be obtained.
\end{rem}

\section{Generating functions for waiting time distributions}
Recall that the probability generating function for waiting time distribution $\tau$ equals
$$
g_\tau(\alpha)=E(\alpha^\tau)=\sum_{n=0}^\infty Pr(\tau=n)\alpha^n
$$
and coefficients of $\alpha^n$ in the power series $E(\alpha^\tau{\bf 1}_{\{\tau=\tau_j\}})$ are the probabilities $Pr(\tau=\tau_j=n)$ that is
$$
g_\tau^j(\alpha)=E(\alpha^\tau{\bf 1}_{\{\tau=\tau_j\}})=\sum_{n=0}^\infty Pr(\tau=\tau_j=n)\alpha^n.
$$
Taking into account that 
$$
g_\tau(\alpha)=E(\alpha^\tau)=\sum_{j=1}^m E(\alpha^\tau{\bf 1}_{\{\tau=\tau_j\}})=\sum_{j=1}^m g^j_\tau(\alpha),
$$
by Proposition \ref{gamb},
we obtain the following system of linear equations
\begin{equation}
\label{eq1}
\left\{
\begin{array}{ccl}
(1-\alpha)Q_\tau(\alpha)+\sum_{j=1}^m g_\tau^{j}(\alpha) & = & 1\\
-Q_\tau(\alpha)+\sum_{j=1}^m (B_j\ast B_i)(\alpha)g_\tau^{j}(\alpha)  & = & (A\ast B_i)(\alpha)\quad (1\le i \le m).
\end{array}
\right.
\end{equation}

Let $\mathcal{A}$ denote the coefficient matrix of the above system, i.e.

$$
\mathcal{A}(\alpha)=
\left(\begin{array}{c|c}
1-\alpha& \begin{array}{ccc} 1 & \ldots & 1\end{array} \\
\hline
\begin{array}{c} -1 \\ \vdots \\ -1 
\end{array} & (B_j\ast B_i)(\alpha)
\end{array}\right)_{1\le i,j \le m},
$$
$\mathcal{B}$ a matrix formed from the correlations functions $B_j\ast B_i$ that is
\begin{equation*}
\mathcal{B}(\alpha)=
\begin{pmatrix}
(B_j\ast B_i)(\alpha)
\end{pmatrix}
_{1\le i,j \le m},
\end{equation*}
and $\mathcal{B}^j$  stands for the matrix that arises on replacing the $j$th column of $\mathcal{B}$ by  the column vector of units 
$(1)_{1\le i \le m}$.
Using the Laplace expansion along the first rows and permuting  columns one can show that
$$
\det \mathcal{A}(\alpha)=(1-\alpha)\det \mathcal{B}(\alpha) +\sum_{j=1}^m \det \mathcal{B}^j(\alpha).
$$

Let $\mathcal{A}_{1+k}$ ($0\le k \le m$) denote the matrix formed by replacing the $1+k$th column of $\mathcal{A}$ by  the column vector  
$(1,(A\ast B_1),...,(A\ast B_m))$ and
$\mathcal{B}_k$ ($1\le k \le m$) be the matrices formed by replacing the $k$th column of $\mathcal{B}$ by  the column vector  
$((A\ast B_i))_{1\le i \le m}$.
Observe that similarly to above one gets
$$
\det \mathcal{A}_1(\alpha)=\det \mathcal{B}(\alpha) -\sum_{k=1}^m \det \mathcal{B}_k(\alpha)
$$
and   
$$
\det \mathcal{A}_{1+k}(\alpha)=(1-\alpha)\det \mathcal{B}_k(\alpha)+\sum_{j=1}^m \det \mathcal{B}_k^j(\alpha) \quad {\rm for}\; 1\le k \le m,
$$
where $\mathcal{B}_k^j$ is the matrix arising upon replacing the $j$th column of $\mathcal{B}_k$ by  the column vector of units 
$(1)_{1\le i \le m}$.

By the Cram\'er rules we obtain the following:
\begin{pro}
The solution of the system (\ref{eq1}) has the form
$$
Q_\tau(\alpha)=\frac{\det \mathcal{B}(\alpha) -\sum_{k=1}^m \det \mathcal{B}_k(\alpha)}{(1-\alpha)\det \mathcal{B}(\alpha) +\sum_{j=1}^m \det \mathcal{B}^j(\alpha)}
$$
and
$$
g_\tau^k(\alpha)=\frac{(1-\alpha)\det \mathcal{B}_k(\alpha)+\sum_{j=1}^m \det \mathcal{B}_k^j(\alpha)}{(1-\alpha)\det \mathcal{B}(\alpha) +\sum_{j=1}^m \det \mathcal{B}^j(\alpha)}
$$
for $1\le k \le m$.
\end{pro}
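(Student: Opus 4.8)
The plan is to read the system (\ref{eq1}) as an inhomogeneous linear system of $m+1$ equations in the $m+1$ unknowns $Q_\tau(\alpha),g_\tau^1(\alpha),\ldots,g_\tau^m(\alpha)$, taken in exactly that order, and then to extract each unknown by Cramer's rule. First I would check that, with this ordering, the coefficient matrix is precisely $\mathcal{A}(\alpha)$. Indeed, the coefficient of $Q_\tau$ is $1-\alpha$ in the first equation and $-1$ in each of the remaining $m$ equations, which is the first column of $\mathcal{A}(\alpha)$; the coefficient of $g_\tau^j$ is $1$ in the first equation and $(B_j\ast B_i)(\alpha)$ in the $i$th of the remaining equations, which is the $(1+j)$th column. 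The right-hand side is the vector $(1,(A\ast B_1)(\alpha),\ldots,(A\ast B_m)(\alpha))^{T}$, i.e.\ exactly the column that defines the auxiliary matrices $\mathcal{A}_{1+k}$.

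Once this identification is in place, Cramer's rule yields the two families of formulas at once. Since $Q_\tau$ occupies the first column, it equals $\det\mathcal{A}_1(\alpha)/\det\mathcal{A}(\alpha)$ (the case $k=0$ of the column replacement), while $g_\tau^k$ occupies the $(1+k)$th column and hence equals $\det\mathcal{A}_{1+k}(\alpha)/\det\mathcal{A}(\alpha)$ for $1\le k\le m$. I would then finish by substituting the three determinant identities already obtained just before the statement, namely
\[
\det\mathcal{A}(\alpha)=(1-\alpha)\det\mathcal{B}(\alpha)+\sum_{j=1}^m\det\mathcal{B}^j(\alpha),\qquad
\det\mathcal{A}_1(\alpha)=\det\mathcal{B}(\alpha)-\sum_{k=1}^m\det\mathcal{B}_k(\alpha),
\]
and
\[
\det\mathcal{A}_{1+k}(\alpha)=(1-\alpha)\det\mathcal{B}_k(\alpha)+\sum_{j=1}^m\det\mathcal{B}_k^j(\alpha)\quad(1\le k\le m),
\]
into these ratios; the resulting expressions are verbatim the two displayed formulas of the Proposition. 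This last step is pure bookkeeping and carries no real difficulty.

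The one genuine point, and the step I expect to be the main obstacle, is the hypothesis under which Cramer's rule is valid: one must know that $\det\mathcal{A}(\alpha)\neq 0$ for $0<\alpha<1$, so that the system is nonsingular and its solution unique. I would first record that a solution certainly exists, since $\tau=\tau_{A\mathfrak{B}}$ is almost surely finite (each $B_i$ eventually occurs in the i.i.d.\ stream) and, by Proposition \ref{gamb}, the genuine generating functions $Q_\tau$ and $g_\tau^k$ satisfy the system; the real work is to upgrade existence to uniqueness. To do so I would argue that $\det\mathcal{A}(\alpha)$ does not vanish on $(0,1)$. The case $m=1$ is instructive: there $\det\mathcal{A}(\alpha)=1+(1-\alpha)(B_1\ast B_1)(\alpha)$, which is strictly positive because $1-\alpha>0$ and the self-correlation $(B_1\ast B_1)(\alpha)$ contains the strictly positive full-overlap term $1/\big(Pr(B_1)\alpha^{l_1}\big)$. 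For general $m$ I would aim to establish nonvanishing either by a positivity/diagonal-dominance argument exploiting that each diagonal self-correlation $(B_i\ast B_i)(\alpha)$ carries such a dominant full-overlap term absent from the off-diagonal entries, or analytically, by combining the small-$\alpha$ dominance (which already forces $\det\mathcal{A}(\alpha)$ to be a nonzero rational function) with the regularity of the true solution on $(0,1)$ to exclude interior zeros. Pinning down this nonvanishing rigorously for all $m$ is the only delicate ingredient; the remainder is the routine Cramer computation described above.
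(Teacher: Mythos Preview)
Your approach is exactly the one the paper takes: it simply says ``By the Cram\'er rules'' after recording the three determinant identities for $\det\mathcal{A}$, $\det\mathcal{A}_1$ and $\det\mathcal{A}_{1+k}$, and states the Proposition. The nonsingularity of $\mathcal{A}(\alpha)$ that you flag as the main obstacle is not addressed in the paper at all, so your discussion there goes beyond what the paper supplies.
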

\begin{rem}
Let us emphasize that in the case where for each $i$ ($1\le i \le m$) $A\ast B_i\equiv 0$, which holds for instance when there is no initial pattern ($A=\emptyset$),
each matrix $\mathcal{B}_k$ possesses a column of zeros, it follows that $\det \mathcal{B}_k\equiv 0$ for $1\le k \le m$ and $\det \mathcal{B}_k^j\equiv 0$ but only
for $j\neq k$. If  $j=k$ then $\mathcal{B}_k^k=\mathcal{B}^k$.
Thus in this case
the above formulas take the simpler forms
$$
Q_\tau(\alpha)=\frac{\det \mathcal{B}(\alpha)}{(1-\alpha)\det \mathcal{B}(\alpha) +\sum_{j=1}^m \det \mathcal{B}^j(\alpha)}
$$
and
$$
g_\tau^{k}(\alpha)=\frac{\det \mathcal{B}^k(\alpha)}{(1-\alpha)\det \mathcal{B}(\alpha) +\sum_{j=1}^m \det \mathcal{B}^j(\alpha)}.
$$

\end{rem}

Since $g_\tau^{k}(1)$ equals $Pr(\tau=\tau_k)$ we get the following:
\begin{cor}
\label{wn1}
Let $\tau=\tau_{A\mathfrak{B}}$ and $\tau_k=\tau_{AB_k}$.
The probability  that the pattern $B_k$ precedes all the remaining $m-1$ patterns is equal to 
\begin{equation}
\label{pro}
Pr(\tau=\tau_k)=\frac{\sum_{j=1}^m \det \mathcal{B}_k^j(1)}{\sum_{j=1}^m \det \mathcal{B}^j(1)}.
\end{equation}
%where the right hand side of the above equality we understand as the limit of (\ref{pgf}) by $s\to 1^-$.
\end{cor}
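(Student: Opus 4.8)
The plan is to obtain the corollary as the boundary value at $\alpha=1$ of the closed form for $g_\tau^k$ furnished by the preceding Proposition, combined with the probabilistic reading $g_\tau^k(1)=Pr(\tau=\tau_k)$ recorded just before the statement. Thus the whole argument amounts to letting $\alpha\to 1^-$ in one explicit rational expression and verifying that this passage to the limit is legitimate.

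First I would confirm the identity $g_\tau^k(1)=Pr(\tau=\tau_k)$. Since $g_\tau^k(\alpha)=\sum_{n=0}^\infty Pr(\tau=\tau_k=n)\alpha^n$ is a power series with nonnegative coefficients, Abel's theorem gives $\lim_{\alpha\to1^-}g_\tau^k(\alpha)=\sum_{n=0}^\infty Pr(\tau=\tau_k=n)$. Because every letter has positive probability, each fixed pattern occurs almost surely, so $\tau=\min_i\tau_{AB_i}<\infty$ a.s.; moreover the hypothesis that no $B_i$ is a subpattern of another excludes simultaneous first occurrences, so the events $\{\tau=\tau_k\}$ partition the sure event and $\sum_{n}Pr(\tau=\tau_k=n)=Pr(\tau=\tau_k)$ with $\sum_k Pr(\tau=\tau_k)=1$. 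Next I would substitute $\alpha=1$ into
$$
g_\tau^{k}(\alpha)=\frac{(1-\alpha)\det \mathcal{B}_k(\alpha)+\sum_{j=1}^m \det \mathcal{B}_k^j(\alpha)}{(1-\alpha)\det \mathcal{B}(\alpha) +\sum_{j=1}^m \det \mathcal{B}^j(\alpha)}.
$$
Every entry of the matrices $\mathcal{B}$, $\mathcal{B}_k$, $\mathcal{B}^j$ and $\mathcal{B}_k^j$ is either a constant or one of the correlation functions $(B_j\ast B_i)(\alpha)$, $(A\ast B_i)(\alpha)$, each of which is a finite sum of terms of the form $\alpha^{-r}$ and hence finite at $\alpha=1$. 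Consequently $\det\mathcal{B}_k(1)$ and $\det\mathcal{B}(1)$ are finite, the factors $(1-\alpha)$ multiplying them vanish in the limit, and the quotient reduces to $\big(\sum_{j}\det\mathcal{B}_k^j(1)\big)\big/\big(\sum_{j}\det\mathcal{B}^j(1)\big)$, which is exactly (\ref{pro}).

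The point requiring care, and the one I expect to be the main obstacle, is to guarantee that the limiting denominator $\sum_{j=1}^m\det\mathcal{B}^j(1)$ does not vanish, for only then does the above substitution produce a genuine quotient. Setting $\alpha=1$ in the earlier expansion $\det\mathcal{A}(\alpha)=(1-\alpha)\det\mathcal{B}(\alpha)+\sum_j\det\mathcal{B}^j(\alpha)$ identifies this denominator with $\det\mathcal{A}(1)$, so the task is to show that the coefficient matrix of the system (\ref{eq1}) stays nonsingular at $\alpha=1$. I would deduce this from the non-degeneracy of the gambling-team system established in the classical theory of Li and of Gerber and Li: the linear system obtained from (\ref{eq1}) at $\alpha=1$ determines the expected waiting time $Q_\tau(1)=E\tau$ and the probabilities $Pr(\tau=\tau_k)$ uniquely, which forces $\det\mathcal{A}(1)\neq0$. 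With the denominator nonzero, continuity of the numerator and denominator at $\alpha=1$ makes the limit equal to the ratio of their values, completing the proof.
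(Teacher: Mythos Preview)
Your proposal is correct and follows essentially the same approach as the paper: evaluate the closed form for $g_\tau^k(\alpha)$ from the preceding Proposition at $\alpha=1$ and use $g_\tau^k(1)=Pr(\tau=\tau_k)$. In fact you supply considerably more rigor than the paper does---the paper records only the single sentence ``Since $g_\tau^{k}(1)$ equals $Pr(\tau=\tau_k)$'' and states the corollary, without discussing Abel's theorem, finiteness of the correlation values at $\alpha=1$, or nonvanishing of the denominator.
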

And since $E\tau=Q_\tau(1)$  we can formulate the following:
\begin{cor}
The expected waiting time  to any pattern equals
$$
E\tau=\frac{\det \mathcal{B}(1) -\sum_{k=1}^m \det \mathcal{B}_k(1)}{\sum_{j=1}^m \det \mathcal{B}^j(1)}.
$$
\end{cor}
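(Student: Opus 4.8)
The plan is to read off $E\tau$ directly from the closed form for $Q_\tau(\alpha)$ supplied by the preceding Proposition, by evaluating it at $\alpha=1$ and invoking the identity $E\tau=Q_\tau(1)$ recorded in Section~\ref{Gteam}. First I would recall that $Q_\tau(\alpha)=\frac{1-g_\tau(\alpha)}{1-\alpha}=\sum_{n=0}^\infty Pr(\tau>n)\alpha^n$ is the generating function of the tail probabilities, so that, whenever the limit exists, $E\tau=Q_\tau(1)=\lim_{\alpha\to 1^-}Q_\tau(\alpha)$. I would then take the expression
$$
Q_\tau(\alpha)=\frac{\det \mathcal{B}(\alpha) -\sum_{k=1}^m \det \mathcal{B}_k(\alpha)}{(1-\alpha)\det \mathcal{B}(\alpha) +\sum_{j=1}^m \det \mathcal{B}^j(\alpha)}
$$
from the previous Proposition and simply pass to the limit $\alpha\to 1$.

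The key observation making this substitution legitimate is that every entry of the matrices $\mathcal{B}$, $\mathcal{B}_k$ and $\mathcal{B}^j$ is built from the correlation functions $(B_j\ast B_i)(\alpha)$ and $(A\ast B_i)(\alpha)$, each of which is a \emph{finite} sum of terms of the form $[\,\cdot\,]/(Pr(\cdot)\alpha^k)$ and hence a rational function of $\alpha$ with no singularity at $\alpha=1$. Consequently all the determinants appearing above are continuous at $\alpha=1$. The numerator therefore tends to $\det\mathcal{B}(1)-\sum_{k}\det\mathcal{B}_k(1)$, while in the denominator the summand $(1-\alpha)\det\mathcal{B}(\alpha)$ vanishes as $\alpha\to 1$ (since $\det\mathcal{B}(\alpha)$ stays bounded near $\alpha=1$), leaving only $\sum_{j}\det\mathcal{B}^j(1)$. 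Substituting $\alpha=1$ then yields the asserted formula, exactly as the companion evaluation $g_\tau^k(1)=Pr(\tau=\tau_k)$ produced Corollary~\ref{wn1}.

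The only step that requires genuine care --- and the one I expect to be the main obstacle --- is the existence of the limit, i.e. the nonvanishing of the denominator $\sum_{j=1}^m\det\mathcal{B}^j(1)$. This is precisely the condition ensuring that the waiting time has a finite expectation, and under the standing hypotheses ($Pr(\xi=a)>0$ for each letter of $\Sigma$, and no pattern of $\mathfrak{B}$ containing another) the stopping time $\tau_{A\mathfrak{B}}$ is almost surely finite with finite mean, so the limit is finite and the evaluation at $\alpha=1$ is valid. Once this is noted, the remaining computation is the routine cancellation of the vanishing term, and the proof is complete.
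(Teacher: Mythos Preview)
Your argument is correct and follows exactly the route indicated in the paper: the Corollary is obtained from the preceding Proposition simply by invoking $E\tau=Q_\tau(1)$ and letting $\alpha\to 1$ in the expression for $Q_\tau(\alpha)$. Your proposal actually supplies more detail than the paper does (continuity of the determinants at $\alpha=1$, vanishing of the $(1-\alpha)\det\mathcal{B}(\alpha)$ term, and nonvanishing of the denominator), which is fine but not strictly required here.
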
  
\begin{rem}
Let us note that the above two corollaries show a form of the solution of the system of equations (3.7) given by Li in \cite{Li}. 
Under the assumption that an  initial pattern is not known ($A=\emptyset$), they turn into the following forms
\begin{equation}
\label{withoutA}
Pr(\tau=\tau_k)=\frac{\det \mathcal{B}^k(1)}{\sum_{j=1}^m \det \mathcal{B}^j(1)}\quad
{\rm and}\quad 
E\tau=\frac{\det \mathcal{B}(1)}{\sum_{j=1}^m \det \mathcal{B}^j(1)}.
\end{equation}
Matrix equations for these solutions can be found in \cite{PozKull}.
\end{rem}

\section{An example}
Let a casino generate $(\xi_n)$ by  a symmetric coin tossing.
Let $H$ and $T$ denote the heads and tails of the coin. Then $\Sigma=\{H,T\}$ and
$$
Pr(\xi=H)= Pr(\xi=T)=\frac{1}{2}.
$$
Take three patterns $B_1=THH$, $B_2=HTH$ and $B_3=HHT$; $\mathfrak{B}=\{THH,\;HTH,\;HHT\}$.
From now on we will write
$p_i$ instead of $Pr(\tau=\tau_i)$ and omit the value $1$ in the notation $\mathcal{B}(1)$ and  $(B_j\ast B_i)(1)$ that is 
$\mathcal{B}=\mathcal{B}(1)$ and $B_j\ast B_i=(B_j\ast B_i)(1)$.

One can calculate that the matrix
$$
\mathcal{B}= \begin{pmatrix} B_j\ast B_i\end{pmatrix}_{1\leq i,j \leq 3}= 
\begin{pmatrix}
	8       &  4   & 2 \\
	2 &   10      & 4 \\
	6    &    2  & 8
\end{pmatrix}.
$$
Consider any initial  pattern $A$ of the length $l\ge 3$ such that $B_i$ ($i=1,2,3$) is not a subpattern of $a_1...a_{l-1}$.
If the final subpattern $A^{(3)}=B_k$ for some $k=1,2,3$ then $A\ast B_i=B_k\ast B_i$ ($i=1,2,3$) and the matrix $\mathcal{B}_k=\mathcal{B}$.
By (\ref{pro}) one gets that $p_k=1$ and it follows that $p_i=0$ for $i\neq k$.

Assume now that $A^{(3)}\notin\mathfrak{B}$. Then $A\ast B_i=A^{(2)}\ast B_i$. We  consider in turn  four possible cases of $A^{(2)}$:
$HH,\;HT,\;TH,\;TT$. %Let us emphasize that these cases  coincide with possibilities in which whole $A$ equals one of these patterns.

Let $A^{(2)}=HH$. Observe that since $A^{(3)}\notin\mathfrak{B}$ in particular $A^{(3)}\neq THH$, it follows $A^{(3)}=HHH$. Moreover 
because $THH$ is not subpattern of $A$ then $A$ must be a run of $H$. If now $\xi_1=H$ then the situation does not change  and we still have  the run of $H$ but if $T$ appears  then it finishes the game. It follows that $p_3=1$ and the other probabilities equal zero.

Let us check the above observations  by applying Corollary \ref{wn1}. For  $A^{(3)}\notin\mathfrak{B}$ and $A^{(2)}=HH$  we have
$$
A\ast B_1=0,\quad A\ast B_2=2\quad {\rm and} \quad A\ast B_3=6
$$
and matrices $\mathcal{B}_k$ ($k=1,2,3$) are equal
$$
\mathcal{B}_1=
\begin{pmatrix}
	0       &  4   & 2 \\
	2 &   10      & 4 \\
	6    &    2  & 8
\end{pmatrix},\quad 
\mathcal{B}_2=
\begin{pmatrix}
	8       &  0   & 2 \\
	2 &   2      & 4 \\
	6    &    6  & 8
\end{pmatrix}\quad
{\rm and}
\quad
\mathcal{B}_3=
\begin{pmatrix}
	8       &  4   & 0 \\
	2 &   10      & 2 \\
	6    &    2  & 6
\end{pmatrix}. 
$$
For the matrix $\mathcal{B}_1$ we get
$$
\sum_{j=1}^m \det \mathcal{B}_1^j=\det\begin{pmatrix}
	1       &  4   & 2 \\
	1 &   10      & 4 \\
	1    &    2  & 8
\end{pmatrix}
+ \det\begin{pmatrix}
	0       &  1   & 2 \\
	2 &   1  & 4 \\
	6    &    1  & 8
\end{pmatrix}
+ \det\begin{pmatrix}
	0       &  4   & 1 \\
	2 &   10  & 1 \\
	6    &    2  & 1
\end{pmatrix}
=40+0-40=0.
$$
In the same manner one can calculate that $\sum_{j=1}^3 \det \mathcal{B}_2^j=0$ and $\sum_{j=1}^3 \det \mathcal{B}_3^j=96$. Since
$\sum_{j=1}^3 \det \mathcal{B}^j =96$, by Corollary \ref{wn1} we do indeed obtain the confirmation of our previous observations:
$p_1=0$, $p_2=0$ and $p_3=1$.

In the case $A^{(2)}=HT$ ($A^{(3)}\notin \mathfrak{B}$) %or $A=HT$ 
we have 
$$
A\ast B_1=2,\quad A\ast B_2=4\quad {\rm and} \quad A\ast B_3=0.
$$
Now we can form  matrices $\mathcal{B}_k$ and calculate $\sum_{j=1}^3 \det \mathcal{B}_1^j=32$, $\sum_{j=1}^3 \det \mathcal{B}_2^j=64$
and $\sum_{j=1}^3 \det \mathcal{B}_3^j=0$. Since $\sum_{j=1}^3 \det \mathcal{B}^j =96$, by Corollary \ref{wn1} we get 
$p_1=\frac{1}{3}$, $p_2=\frac{2}{3}$ and $p_3=0$. In this case it is not quite so easy to observe that the game may not finish the pattern
$B_3$.

For $A^{(2)}=TH$ ($A^{(3)}\notin \mathfrak{B}$) %or $A=TH$
$$
A\ast B_1=4,\quad A\ast B_2=2\quad {\rm and} \quad A\ast B_3=2
$$
and the probabilities $p_i$ equal $p_1=\frac{2}{3}$, $p_2=\frac{1}{3}$ and $p_3=0$. Because for $A^{(2)}=TT$ %($A^{(3)}\notin \mathfrak{B}$), 
%$A=TT$ 
and $A=T$ the values
$$
A\ast B_1=2,\quad A\ast B_2=0,\quad  A\ast B_3=0
$$
are the same then we obtain the same values of the probabilities: $p_1=\frac{2}{3}$, $p_2=\frac{1}{3}$ and $p_3=0$. Let us emphasize 
that the above values $A\ast B_i$ are different than in the previous case $A^{(2)}=TH$ but we obtained  the same values of probabilities $p_i$.

For completeness of presentation we should calculate the probabilities $p_i$  in the cases $A=H$ and $A=\emptyset$.
In the first one we get $p_1=\frac{1}{6}$, $p_2=\frac{1}{3}$ and $p_3=\frac{1}{2}$ and when there is no initial pattern $A$ by  (\ref{withoutA})
one gets $p_1=\frac{5}{12}$, $p_2=\frac{1}{3}$ and $p_3=\frac{1}{4}$. 

Thus we obtain the full description   of the probabilities  $Pr(\tau_{A\mathfrak{B}}=\tau_{AB_i})$ ($i=1,2,3$) for the given collection of patterns $\mathfrak{B}$ and  any initial word $A$.

%Krzysztof Zajkowski \\ 

\end{document}